\newtheorem{lemma}{Lemma}
\newtheorem{theorem}[lemma]{Theorem}
\newtheorem{corollary}[lemma]{Corollary}
\newtheorem*{assumption}{Assumption}
\newcommand*{\bpd}{\mathop{\mathrm{bpd}}\displaylimits}
\title[Some criteria for balanced projectivity]{On some criteria for the balanced projectivity of modules over integral domains}
\author[J. E. Mac\'{\i}as-D\'{\i}az]{J. E. Mac\'{\i}as-D\'{\i}az} 
\address{Departamento de Matem\'{a}ticas y F\'{\i}sica, Universidad Aut\'{o}noma de Aguascalientes, Avenida Universidad 940, Ciudad Universitaria, Aguascalientes, Ags. 20131, Mexico}
\email{jemacias@correo.uaa.mx}
\subjclass[2000]{Primary 13C10, 13C05; Secondary 16D40, 13F05}
\keywords{Direct limits, ascending chains of modules, complete decomposability, pure submodules, integral domains}
\date{\today}
\begin{document}

\begin{abstract}
Motivated by Hill's criterion of freeness for abelian groups, we investigate conditions under which unions of ascending chains of balanced-projective modules over integral domains are again balanced-projective. Our main result establishes that, in order for a torsion-free module to be balanced-projective, it is sufficient that it be the union of a countable, ascending chain of balanced-projective, pure submodules. The proof reduces to the completely decomposable case, and it hinges on the existence of suitable families of submodules of the links in the chain. A Shelah-Eklof-type result for the balanced projectivity of modules is proved in the way, and a generalization of Auslander's lemma is obtained as a corollary.
\end{abstract}

\maketitle

\section{Introduction}

Beforehand, recall that a subgroup $H$ of a commutative group $G$ is \emph {relatively divisible} if every equation of the form $k x = a$, with $k \in \mathbb {Z}$ and $a \in H$, has a solution in $H$ whenever it has a solution in $G$. Equivalently, $H$ is a \emph {pure} subgroup of $G$, that is, every system of equations of the form
\begin{equation*}
	\sum _{j = 1} ^m k _{i j} x _j = a _i \quad (i = 1 , \dots , n),
\end{equation*}
with each $k _{i j} \in \mathbb {Z}$ and $a _i \in H$, is solvable in $H$ whenever it is solvable in $G$.

In $1934$, L. Pontryagin proved that a countable, torsion-free abelian group is free if and only if every finite rank, pure subgroup is free \cite {Pontryagin}. Equivalently, every properly ascending chain of pure subgroups of finite rank is finite. From the proof of this result, it follows that a torsion-free abelian group $G$ is free if there exists a countable, ascending chain
\begin{equation}
	0 = G _0 < G _1 < \dots < G _n < \dots \quad (n < \omega),
	\label{CountChain}
\end{equation}
consisting of pure subgroups of $G$ whose union is equal to $G$, such that every $G_n$ is free and countable.

In $1970$, P. Hill established that, in order for an abelian group $G$ to be free, it is sufficient that it be the union of an ascending chain \eqref {CountChain} of free, pure subgroups \cite {Hill}. In other words, Hill proved that the condition of countability on the cardinality of the links $G _n$ in Pontryagin's theorem was superfluous. The proof of this theorem hinges on some important facts about commutative groups, one of them being that subgroups of torsion-free abelian groups can be embedded in pure subgroups of the same rank, another being the existence of some suitable collections of subgroups of free abelian groups.

Hill's criterion of freeness has been recently generalized to a criterion of projectivity for modules over integral domains. More concretely, it has been established that unions of countable, ascending chains of projective, pure submodules over domains are again projective (see Theorem 1.4 of Chapter XVI in \cite {Fuchs-Salce2}). This result generalizes a previous version available in the literature, valid for valuation domains and, as in the case of commutative groups, its proof relies again on the condition of purity, and the existence of suitable collections of submodules. 

The concepts of freeness and projectivity of modules find their generalizations in the notions of complete decomposability and balanced projectivity \cite {Fuchs-Salce2}, respectively, so that it is natural to ask whether there exists a further generalization of Hill's theorem that considers these conditions. Indeed, some progress has been achieved in this direction. For instance, K. M. Rangaswamy proved in 1998 one generalization for modules over valuation domains, namely, that modules over such integral domains are completely decomposable if they are unions of countable, ascending chains of completely decomposable, pure submodules \cite {Rangas}.

In the present work, we extend Rangaswamy's theorem to modules over arbitrary domains, and present generalizations to balanced-projective modules of the well-known Auslander's lemma and the Shelah-Eklof criterion of freeness. Throughout our investigation, modules will be defined over an arbitrary integral domain $R$.

\section{Balanced projectivity}

The concepts of balancedness and complete decomposability of modules are generalizations of the corresponding terms for abelian groups. For example, by a \emph {completely decomposable} module we understand a direct sum of rank-one, torsion-free modules. Free modules are clearly completely decomposable.

A submodule $A$ of a module $B$ is \emph {balanced} in $B$ if $B / A$ is torsion-free, and every rank-one, torsion-free module $J$ has the \emph {projective property} with respect to the exact sequence $0 \rightarrow A \rightarrow B \rightarrow B / A \rightarrow 0$, that is, for every homomorphism $\phi : J \rightarrow B / A$, there exists a homomorphism $\psi : J \rightarrow B$ which makes the following diagram commute:
\begin{equation*}
	\xymatrix{
		 & & & J \ar[d]^{\phi}\ar[ld]_\psi & \\
		0 \ar[r] & A \ar[r] & B \ar[r] & B / A \ar[r] & 0}
\end{equation*}
An exact sequence $0 \rightarrow A \rightarrow B \rightarrow C \rightarrow 0$ is \emph {balanced-exact} if $C$ is torsion-free and $A$ is balanced in $B$.

A module is \emph {balanced-projective} if it has the projective property with respect to every balanced-exact sequence. Completely decomposable modules are balanced-projective; moreover, a torsion-free, balanced-projective module is equivalently a direct summand of a completely decomposable module of the same rank (see \cite {Fuchs-Salce2} for properties on balancedness, balanced projectivity and complete decomposability). Obviously, every projective module is balanced-projective, and direct sums of balanced-projective (resp., completely decomposable) modules are likewise balanced-projective (resp., completely decomposable) modules.

The following result is a straightforward criterion for balanced exactness of sequences.

\begin{lemma}
	\label{Lemma:2}
	Consider the following commutative diagram with exact rows, and torsion-free modules $M$ and $M ^\prime$:
	\begin{equation*}
		\xymatrix{
			0 \ar[r] & L \ar[r]\ar[d] & N \ar[r]^\alpha\ar[d]_\mu & M \ar[d]^\tau\ar[r] & 0\\
			0 \ar[r] & L ^\prime \ar[r] & N ^\prime \ar[r]_\beta & M ^\prime \ar[r] & 0}
	\end{equation*}
	If the top row is balanced-exact and there exists a homomorphism $\rho : M ^\prime \rightarrow M$ such that $\tau \rho = 1$, then the bottom row is likewise balanced-exact.
\end{lemma}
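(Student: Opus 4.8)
The plan is to show directly that the bottom row inherits both defining properties of a balanced-exact sequence: torsion-freeness of $M'$ (which is already assumed) and balancedness of $L'$ in $N'$. Since $M'$ is torsion-free by hypothesis, the only work is to verify that every rank-one, torsion-free module $J$ has the projective property with respect to $0 \rightarrow L' \rightarrow N' \rightarrow M' \rightarrow 0$. So I would fix such a $J$ together with a homomorphism $\phi : J \rightarrow M'$ and try to lift it to $N'$.

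The idea is to transport the lifting problem up to the top row, which we know is balanced. First I would form the composite $\rho\phi : J \rightarrow M$. Because the top row is balanced-exact and $J$ is rank-one torsion-free, there is a homomorphism $\psi : J \rightarrow N$ with $\alpha\psi = \rho\phi$. Now push back down via $\mu$: set $\psi' = \mu\psi : J \rightarrow N'$. Using commutativity of the right-hand square, $\beta\psi' = \beta\mu\psi = \tau\alpha\psi = \tau\rho\phi = \phi$, where the last equality uses the retraction hypothesis $\tau\rho = 1$. Thus $\psi'$ is the desired lift, and $J$ has the projective property with respect to the bottom row.

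The main obstacle is essentially bookkeeping: one must make sure the diagram chase uses only the data actually provided, namely the commutativity of both squares and the identity $\tau\rho = 1$, and that $\rho$ is only required to be a section of $\tau$, not a map commuting with anything else. A secondary point worth a sentence is that balancedness of a submodule also requires the quotient to be torsion-free; here $M'$ is torsion-free by assumption, so $L'$ being balanced in $N'$ reduces exactly to the projective-property condition just verified, and no further argument is needed. I expect the proof to be short, with the retraction $\rho$ doing all the real work in converting a lift over the top row into a lift over the bottom one.
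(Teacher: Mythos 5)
Your proposal is correct and is essentially identical to the paper's proof: lift $\rho\phi$ through the balanced top row to get $\sigma : J \to N$ with $\alpha\sigma = \rho\phi$, then push down by $\mu$ and use $\beta\mu = \tau\alpha$ together with $\tau\rho = 1$ to conclude $\beta(\mu\sigma) = \phi$. Your additional remark that torsion-freeness of $M'$ is part of the hypothesis, so only the projective property needs checking, is accurate and matches the paper's (implicit) reading of the definition.
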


\begin{proof}
	Let $\phi$ be a homomorphism from a rank-one, torsion-free module $J$ into $M ^\prime$. There exists a homomorphism $\sigma : J \rightarrow N$ such that $\alpha \sigma = \rho \phi$. Clearly, $\psi = \mu \sigma$ has the property that $\beta \psi = \phi$, whence the conclusion follows.
\end{proof}

Recall that a submodule $N$ of an $R$-module $M$ is \emph {relatively divisible} if $r N = N \cap r M$, for every $r \in R$ \cite {Warfield}. Equivalently, every equation of the form $r x = a$, with $r \in R$ and $a \in N$, has a solution in $N$ whenever it has a solution in $M$. We say that $N$ is \emph {pure} in $M$ if every system of equations 
\begin{equation*}
	\sum _{j = 1} ^m r _{i , j} x _j = a _i \quad (i = 1 , \ldots , n),
\end{equation*}
with each $r _{i , j} \in R$ and $a _i \in N$, is solvable in $N$ whenever it is solvable in $M$. Evidently, purity implies relative divisibility, and they both coincide over Pr\"{u}fer domains \cite {Cartan-Eilenberg}. Moreover, balanced submodules are relatively divisible, and direct summands are relatively divisible, pure and balanced submodules. We refer to \cite {Fuchs-Salce2} for more properties on these conditions.

The following lemma allows to reduce the study of ascending chains of balanced-projective modules to the case of chains of completely decomposable modules.

\begin{lemma}
	\label {Lemma:6}
	Let $M$ be a torsion-free module for which there exists a countable, ascending chain
	\begin{equation}
		\label {Eq:CountChain}
		0 = M _0 < M _1 < \ldots < M _n < \ldots \quad (n < \omega)
	\end{equation}
	of submodules of $M$, such that
	\begin{enumerate}
		\item[(a)] every $M _n$ is a balanced-projective module,
		\item[(b)] every $M _n$ has countable rank,
		\item[(c)] every $M _n$ is pure (resp.,  relatively divisible) in $M$, and
		\item[(d)] $M = \bigcup _{n < \omega} M _n$.
	\end{enumerate}
	Then, there exists a countable, ascending chain
	\begin{equation}
		\label {Eq:CountChain-F}
		0 = F _0 < F _1 < \ldots < F _n < \ldots \quad (n < \omega)
	\end{equation}
	of modules, such that
	\begin{enumerate}
		\item[(A)] every $F _n$ is a completely decomposable module,
		\item[(B)] every $F _n$ has countable rank,
		\item[(C)] every $F _n$ is a pure (resp.,  relatively divisible) submodule of $F _{n + 1}$, and
		\item[(D)] every $F _n$ contains $M _n$ as a direct summand, say, $F _n = M _n \oplus K _n$. 
	\end{enumerate}
	Moreover, $M$ is a direct summand of $F = \bigcup _{n < \omega} F _n$.
\end{lemma}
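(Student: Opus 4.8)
The plan is to construct the chain $\{F_n\}_{n<\omega}$ recursively, carrying along at each step a complement $K_n$ of $M_n$ inside $F_n$ and arranging that the $K_n$ also form an ascending chain; this extra bookkeeping is exactly what will let $M$ split off in the limit. Put $F_0 = 0$ and $K_0 = 0$. Suppose $F_n = M_n \oplus K_n$ has been built, completely decomposable of countable rank. Since $K_n$ is a direct summand of the balanced-projective module $F_n$, it is itself torsion-free and balanced-projective (the latter being immediate from the definition of balanced projectivity); by hypothesis (a) so is $M_{n+1}$, and hence so is $M_{n+1} \oplus K_n$. By the characterization of torsion-free balanced-projective modules recalled earlier, $M_{n+1} \oplus K_n$ is a direct summand of a completely decomposable module $F_{n+1}$ of the same rank, which is countable by (b) together with the inductive bound on the rank of $K_n$. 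Writing $F_{n+1} = (M_{n+1}\oplus K_n) \oplus E_{n+1}$ and setting $K_{n+1} := K_n \oplus E_{n+1}$, we obtain $F_{n+1} = M_{n+1} \oplus K_{n+1}$ with $K_n \subseteq K_{n+1}$, and $F_n = M_n \oplus K_n \subseteq M_{n+1} \oplus K_n \subseteq F_{n+1}$ via the given inclusion $M_n \hookrightarrow M_{n+1}$. This yields (A), (B), (D) and makes $\{F_n\}$ an ascending chain.

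For (C), the point is to notice first that each $M_n$ is pure (resp.\ relatively divisible) not only in $M$ but already in $M_{n+1}$: a system of equations with parameters in $M_n$ that is solvable in $M_{n+1}$ is, a fortiori, solvable in $M$, hence in $M_n$ by (c); the relatively-divisible case is identical. Combining this with the standard permanence properties --- a finite direct sum of pure (resp.\ relatively divisible) submodules is pure (resp.\ relatively divisible), direct summands are pure and relatively divisible, and both notions are transitive --- one gets that $M_n \oplus K_n$ is pure (resp.\ relatively divisible) in $M_{n+1}\oplus K_n$, and the latter is a direct summand, hence pure and relatively divisible, in $F_{n+1}$; transitivity then gives that $F_n$ is pure (resp.\ relatively divisible) in $F_{n+1}$.

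Finally I would identify $M$ as a direct summand of $F = \bigcup_{n<\omega} F_n$. Under the inclusions $F_n \hookrightarrow F_{n+1}$ constructed above, $M_n$ is sent into $M_{n+1}$ and $K_n$ into $K_{n+1}$, so $\bigcup_n M_n$ and $\bigcup_n K_n$ are submodules of $F$; by (d) the first one is $M$. Because $\{M_n\}$ and $\{K_n\}$ are ascending, $\big(\bigcup_n M_n\big) + \big(\bigcup_n K_n\big) = \bigcup_n (M_n + K_n) = \bigcup_n F_n = F$ and $\big(\bigcup_n M_n\big) \cap \big(\bigcup_n K_n\big) = \bigcup_n (M_n \cap K_n) = 0$, so $F = M \oplus \bigcup_{n<\omega} K_n$.

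The only genuinely delicate point is the choice made at the start of the recursive step: one must embed $M_{n+1} \oplus K_n$, rather than $M_{n+1}$ alone, as a direct summand of the next completely decomposable link. Feeding the complements $K_n$ forward in this way is what forces $M$ to split off $F$; dropping it would leave each $M_n$ sitting in $F_n$ with no compatible way of assembling the pieces in the limit. Everything else is the routine permanence of purity and relative divisibility under finite direct sums, restriction and composition, together with the basic facts about balanced-projective and completely decomposable modules recalled earlier.
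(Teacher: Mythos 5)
Your proposal is correct and follows essentially the same route as the paper: embed $M_{n+1}\oplus K_n$ (not just $M_{n+1}$) as a summand of a countable-rank completely decomposable $F_{n+1}$, set $K_{n+1}=K_n\oplus E_{n+1}$ so the complements ascend, and conclude $F=M\oplus\bigcup_n K_n$. You simply make explicit several points the paper leaves tacit (that $K_n$ is balanced-projective as a summand of $F_n$, the purity of $F_n$ in $F_{n+1}$ via transitivity, and the union computations at the end).
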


\begin{proof}
	Suppose that we have already constructed the modules $F _0 , \ldots , F _n$ as desired, for some $n < \omega$. The module $M _{n + 1} \oplus K _n$ is a countable-rank, balanced-projective module, then there exists a countable-rank, completely decomposable module $F _{n + 1}$ containing $M _{n + 1} \oplus K _n$ as a direct summand. The module $F _n$ is a pure (resp.,  relatively divisible) submodule of $F _{n + 1}$, and the chain is thus constructed by induction. Moreover, the way that the links of \eqref {Eq:CountChain-F} were constructed guarantees that the sequence $\{ K _n \} _{n < \omega}$ forms an ascending chain under inclusion. Consequently, $F$ is the direct sum of $M$ and the module $\bigcup _{n < \omega} K _n$.
\end{proof}

\section{$G (\kappa) ^\ast$-families}

Let $\kappa$ be an infinite cardinal number. A \emph {$G (\kappa) ^\ast$-family} of a module $M$ is a set $\mathcal {B}$ of submodules of $M$ with the following properties:
\begin{enumerate}
	\item[(i)] $0 , M \in \mathcal {B}$,
	\item[(ii)] $\mathcal {B}$ is closed under unions of ascending chains, and
	\item[(iii)] for every $H \subset M$ of cardinality at most $\kappa$ and every $A _0 \in \mathcal {B}$, there exists $A \in \mathcal {B}$ containing both $A _0$ and $H$, such that $A / A _0$ has rank at most $\kappa$.
\end{enumerate}
Evidently, if a module $M$ can be decomposed as $M = \oplus _{\alpha \in \Omega} A _\alpha$, where each $A _\alpha$ has countable rank, then the set of all modules of the form $\oplus _{\alpha \in \Lambda} A _\alpha$, with $\Lambda \subset \Omega$, is a $G (\aleph _0) ^\ast$-family of submodules of $M$, called the \emph {family of partial direct sums} in the given decomposition of $M$.

\begin{assumption} 
	For the remainder of the present section, we assume that $M$ is a torsion-free module over an integral domain, for which there exists a continuous, well-ordered, ascending chain
	\begin{equation}
		\label{Eq:kappa-Chain}
		0 = M _0 < M _1 < \ldots < M _\nu < \ldots \quad (\nu < \kappa)
	\end{equation}
	of submodules of $M$, satisfying the following properties:
	\begin{enumerate}
		\item[$\mathbf {P _1}$] every $M _\nu$ is pure in $M$,
		\item[$\mathbf {P _2}$] every $M _\nu$ has a $G (\kappa) ^\ast$-family $\mathcal {B} _\nu$ consisting of pure submodules, and
		\item[$\mathbf {P _3}$] $M = \bigcup _{\nu < \kappa} M _\nu$.
	\end{enumerate}
\end{assumption}

The next results are reproduced from \cite {Fuchs-Salce2} (refer to Section XVI.1 for the proofs), and they are important tools in our investigation.

\begin{lemma}
	\label {Lemma:3-9}
	There exists a continuous, well-ordered, ascending chain 
	\begin{equation}
		\label{Eq:tau-Chain}
		0 = A _0 < A _1 < \ldots < A _\alpha < \ldots \quad (\alpha < \tau)
	\end{equation}
	of submodules of $M$, such that
	\begin{enumerate}
		\item[(a)] for every $\alpha < \tau$ and $\nu < \kappa$, $A _\alpha \cap M _\nu \in \mathcal {B} _\nu$,
		\item[(b)] for every $\alpha < \tau$, the factor module $A _{\alpha + 1} / A _\alpha$ has rank at most $\kappa$,
		\item[(c)] for every $\alpha < \tau$ and $\nu < \kappa$, $(A _\alpha \cap M _{\nu + 1}) + (A _{\alpha + 1} \cap M _\nu) \in \mathcal {B} _{\nu + 1}$, and
		\item[(d)] $M = \bigcup _{\alpha < \tau} A _\alpha$. \qed
	\end{enumerate}
\end{lemma}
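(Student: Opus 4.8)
The plan is to construct the chain $\{A_\alpha\}_{\alpha<\tau}$ by transfinite recursion on $\alpha$, maintaining (a) and (c) as running invariants and reading (b) and (d) off the construction. I would start with $A_0=0$, which lies in $\mathcal B_\nu$ for every $\nu$ by property (i); at a limit ordinal $\lambda$ continuity forces $A_\lambda:=\bigcup_{\alpha<\lambda}A_\alpha$, and here (a) is preserved because $A_\lambda\cap M_\nu=\bigcup_{\alpha<\lambda}(A_\alpha\cap M_\nu)$ is an ascending union of members of $\mathcal B_\nu$, hence a member of $\mathcal B_\nu$ by property (ii). A fact I would use throughout is that each $A_\alpha$ so produced is \emph{pure} in $M$: it is the directed union of the submodules $A_\alpha\cap M_\nu$, each of which is pure in $M_\nu$ (being a member of $\mathcal B_\nu$) and hence pure in $M$ by $\mathbf{P_1}$. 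Consequently every quotient $M/(A_\alpha\cap M_\nu)$ and every $A_{\alpha+1}/A_\alpha$ is torsion-free, which is what makes the rank estimates behave.

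The substance is the successor step. Given $A:=A_\alpha$ satisfying (a), with $A\neq M$, I would choose $x\in M\setminus A$ (running $x$ through an enumeration of $M$ so as to eventually secure (d)) and build $A_{\alpha+1}=\bigcup_{n<\omega}D_n$ for an ascending chain $A=D_0\subseteq D_1\subseteq\cdots$, obtained by closing off simultaneously over all levels $\nu<\kappa$. At the passage $D_n\to D_{n+1}$ I would, for each $\nu<\kappa$: first apply property (iii) of the family $\mathcal B_\nu$ of $M_\nu$ to enlarge the previous member $B^\nu_{n-1}$ (with $B^\nu_{-1}:=A\cap M_\nu$) to a member $B^\nu_n\in\mathcal B_\nu$ containing a maximal $R$-independent subset of $D_n\cap M_\nu$ over $B^\nu_{n-1}$ — such a subset has cardinality at most $\kappa$ because $D_n/D_{n-1}$ has rank at most $\kappa$, and since $B^\nu_n$ is pure in $M_\nu$ and $M/B^\nu_{n-1}$ is torsion-free this in fact forces $D_n\cap M_\nu\subseteq B^\nu_n$; and second apply property (iii) of $\mathcal B_{\nu+1}$ to obtain $C^\nu_n\in\mathcal B_{\nu+1}$ containing $(A\cap M_{\nu+1})+B^\nu_n$ with $C^\nu_n/(A\cap M_{\nu+1})$ of rank at most $\kappa$. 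I would then set $D_{n+1}:=D_n+\sum_{\nu<\kappa}C^\nu_n$, adjoining $x$ at the first step. Since each $C^\nu_n$ contributes rank at most $\kappa$ over $A$ and there are $\kappa$ of them, and $\kappa\cdot\kappa=\kappa$ and then $\kappa\cdot\aleph_0=\kappa$, the module $A_{\alpha+1}/A$ has rank at most $\kappa$, which is (b); and $A_{\alpha+1}\supsetneq A$ because $x\notin A$, so the chain is strictly increasing and, with the enumeration of $M$, its union is $M$, giving (d). Finally the sandwich $D_n\cap M_\nu\subseteq B^\nu_n\subseteq D_{n+1}\cap M_\nu$ yields $A_{\alpha+1}\cap M_\nu=\bigcup_n B^\nu_n$, an ascending union in $\mathcal B_\nu$, hence a member of $\mathcal B_\nu$ — this is (a).

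The step I expect to be the main obstacle is (c). The naive hope is the parallel sandwich $(A\cap M_{\nu+1})+B^\nu_n\subseteq C^\nu_n\subseteq (A\cap M_{\nu+1})+B^\nu_{n+1}$, which would force $(A_\alpha\cap M_{\nu+1})+(A_{\alpha+1}\cap M_\nu)=\bigcup_n C^\nu_n\in\mathcal B_{\nu+1}$; but the second containment fails in general, because property (iii), in producing $C^\nu_n$, may attach to $A\cap M_{\nu+1}$ generators lying in $M_{\nu+1}$ but outside $(A\cap M_{\nu+1})+M_\nu$, which no enlargement of the lower piece $B^\nu_{n+1}\subseteq M_\nu$ can absorb. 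Nor can one simply replace the sum by the larger module $\bigcup_n C^\nu_n$, since (c) demands the sum \emph{itself} be a member of $\mathcal B_{\nu+1}$, and a sum of two pure submodules need not be pure. Securing (c) therefore calls for a coordinated refinement: process the levels $\nu$ bottom-up within each step $D_n\to D_{n+1}$, and choose the $C^\nu_n$ so that the material they introduce above level $\nu$ is, modulo $A\cap M_{\nu+1}$, matched by material simultaneously folded into the level-$\nu$ piece of $D_{n+1}$, with the single protruding element $x$ (which sits at a fixed least level) handled separately. This amounts to a back-and-forth between the consecutive families $\mathcal B_\nu$ and $\mathcal B_{\nu+1}$, and is precisely where the purity hypothesis $\mathbf{P_1}$ is used in earnest; the full bookkeeping is carried out in \cite{Fuchs-Salce2}, Section XVI.1.
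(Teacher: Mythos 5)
First, the relevant point of comparison: the paper does not prove this lemma at all. It is stated with a \qed{} and explicitly imported from Section XVI.1 of \cite{Fuchs-Salce2} (``The next results are reproduced from \dots refer to Section XVI.1 for the proofs''). So there is no in-paper argument to measure yours against; the only question is whether your reconstruction stands on its own.

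It does not, and you say so yourself. The closing-off recursion you describe is the correct skeleton for (a), (b) and (d): $A_0=0$ lies in every $\mathcal{B}_\nu$, limits are handled by closure of each $\mathcal{B}_\nu$ under ascending unions, and at successors the maximal-independent-set-plus-purity trick (using $\mathbf{P_1}$, $\mathbf{P_2}$ and torsion-freeness) forces $D_n\cap M_\nu\subseteq B^\nu_n$, with $\kappa\cdot\kappa\cdot\aleph_0=\kappa$ giving the rank bound in (b). That part is sound, if routine. But condition (c) is the actual content of the lemma for this paper --- in the proof of Theorem \ref{Thm:Main} it is precisely what makes $(A_\alpha\cap M_{n+1})+(A_{\alpha+1}\cap M_n)$ a partial direct sum, hence a direct summand and therefore a balanced submodule of $A_{\alpha+1}\cap M_{n+1}$ --- and your proposal diagnoses the difficulty (the enlargement $C^\nu_n\in\mathcal{B}_{\nu+1}$ supplied by property (iii) may contain elements of $M_{\nu+1}$ outside $(A_\alpha\cap M_{\nu+1})+M_\nu$, which no growth of the level-$\nu$ chain can absorb, and the sum of two members of the families need not itself be a member) without resolving it. Gesturing at a ``coordinated refinement'' whose ``full bookkeeping is carried out in \cite{Fuchs-Salce2}'' is a citation, not a proof, and it is a citation placed at exactly the step where the naive construction fails. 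As written, then, the proposal establishes (a), (b), (d) but not the lemma. Either carry out the interleaved construction for (c) in detail, or do what the paper does and quote the lemma from \cite{Fuchs-Salce2} outright rather than presenting a partial derivation that stops at the hard point.
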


Following the notation in Lemma \ref {Lemma:3-9}, the well-ordered collection of modules $A _\alpha + (A _{\alpha + 1} \cap M _\nu)$ under inclusion, with $\alpha < \tau$ and $\nu < \kappa$, are indeed the modules $N _\rho$ in the next general result.

\begin{theorem}
	\label{Lemma:3-10}
	There exists a continuous, well-ordered, ascending chain
\begin{equation*}
	0 = N _0 < N _1 < \ldots < N _\rho < \ldots \quad (\rho < \sigma)
\end{equation*}
of submodules of $M$, such that
\begin{enumerate}
	\item[(a)] every factor module $N _{\rho + 1} / N _\rho$ has rank at most $\kappa$,
	\item[(b)] every factor module $N _{\rho + 1} / N _\rho$ is isomorphic to a factor module $A / B$, with both $A$ and $B$ in some family $\mathcal {B} _\nu$ and $B < A$, and
	\item[(c)] $M = \bigcup _{\rho < \sigma} N _\rho$. \qed
\end{enumerate}
\end{theorem}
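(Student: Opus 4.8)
The idea is to construct the chain $\{N_\rho\}$ by a simultaneous, "diagonal" enumeration of the doubly-indexed family of modules
$$
N_{\alpha,\nu} = A_\alpha + (A_{\alpha+1} \cap M_\nu), \qquad \alpha < \tau,\ \nu < \kappa,
$$
supplied by Lemma \ref{Lemma:3-9}, as suggested by the remark preceding the statement. First I would observe that, for fixed $\alpha$, the modules $N_{\alpha,\nu}$ form a continuous ascending chain in $\nu$ (using that $\{M_\nu\}$ is continuous, $\mathbf{P_3}$, and that intersection with $A_{\alpha+1}$ commutes with unions of chains), which runs from $N_{\alpha,0} = A_\alpha$ up to $N_{\alpha,\kappa} := A_\alpha + A_{\alpha+1} = A_{\alpha+1}$, since $M = \bigcup_\nu M_\nu$ and $A_{\alpha+1}$ is the relevant union. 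Concatenating these "horizontal" chains as $\alpha$ runs through $\tau$, and noting that the top of the $\alpha$-th chain equals the bottom $A_{\alpha+1}$ of the $(\alpha+1)$-st, yields one long continuous chain whose union is $\bigcup_{\alpha<\tau} A_\alpha = M$ by part (d) of Lemma \ref{Lemma:3-9}. After deleting repetitions and re-indexing by an ordinal $\sigma$, this is the chain $\{N_\rho\}_{\rho<\sigma}$; property (c) is immediate and continuity is preserved under concatenation of continuous chains.

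The substance is in properties (a) and (b), which concern the successor quotients. The successive quotients of the re-indexed chain are of two kinds. The first kind comes from a jump within a fixed horizontal chain, i.e. from $N_{\alpha,\nu}$ to $N_{\alpha,\nu+1}$; here the quotient is
$$
\frac{A_\alpha + (A_{\alpha+1}\cap M_{\nu+1})}{A_\alpha + (A_{\alpha+1}\cap M_\nu)}
\cong \frac{(A_{\alpha+1}\cap M_{\nu+1})}{(A_{\alpha+1}\cap M_{\nu+1}) \cap \bigl(A_\alpha + (A_{\alpha+1}\cap M_\nu)\bigr)},
$$
and by a modular-law manipulation the denominator is $(A_\alpha \cap M_{\nu+1}) + (A_{\alpha+1}\cap M_\nu)$, so the quotient is
$$
\frac{A_{\alpha+1}\cap M_{\nu+1}}{(A_\alpha \cap M_{\nu+1}) + (A_{\alpha+1}\cap M_\nu)}.
$$
By Lemma \ref{Lemma:3-9}(a) the numerator $A_{\alpha+1}\cap M_{\nu+1}$ lies in $\mathcal{B}_{\nu+1}$, and by Lemma \ref{Lemma:3-9}(c) the denominator lies in $\mathcal{B}_{\nu+1}$ as well; thus this quotient is of the form $A/B$ with $A,B \in \mathcal{B}_{\nu+1}$, giving (b), and it has rank at most $\kappa$ because it is a subquotient of $A_{\alpha+1}/A_\alpha$, which has rank at most $\kappa$ by Lemma \ref{Lemma:3-9}(b), giving (a). The second kind is a limit step, where $N_\rho$ is the union of all $N_{\alpha,\nu}$ below it; by continuity there is no successor quotient to check there. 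One also checks the "boundary" step from the top $A_{\alpha+1}$ of one horizontal chain to the bottom of the next is already covered, since $A_{\alpha+1} = N_{\alpha+1,0}$ is the limit of the horizontal chain above it, so no new successor quotient is created.

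The main obstacle I anticipate is purely bookkeeping: arranging the lexicographic concatenation of the $\tau \times \kappa$ grid into a single well-ordered chain indexed by an ordinal $\sigma$, correctly identifying which indices correspond to successor steps and which to limit steps, and verifying that after collapsing the (possibly many) repetitions $N_{\alpha,\nu} = N_{\alpha,\nu+1}$ the result is still continuous. The algebra in the two displays above — the modular-law identifications and the subquotient-of-$A_{\alpha+1}/A_\alpha$ rank bound — is routine. No new module-theoretic input beyond Lemmas \ref{Lemma:3-9} and \ref{Lemma:3-10}'s hypotheses is needed; in particular purity of the links plays no explicit role here, having been absorbed into the construction of the $\mathcal{B}_\nu$ and the chain $\{A_\alpha\}$.
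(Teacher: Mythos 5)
Your proposal is correct and is exactly the construction the paper intends: the remark preceding the theorem identifies the $N_\rho$ as the lexicographically ordered family $A_\alpha + (A_{\alpha+1}\cap M_\nu)$, and your modular-law identification of the successor quotients with $(A_{\alpha+1}\cap M_{\nu+1})/\bigl((A_\alpha\cap M_{\nu+1})+(A_{\alpha+1}\cap M_\nu)\bigr)$ is precisely the isomorphism the paper invokes (and displays again in the proof of Theorem \ref{Thm:Main}), with (a) and (b) following from parts (a)--(c) of Lemma \ref{Lemma:3-9} as you say. The paper itself defers the detailed verification to Fuchs--Salce, so your write-up supplies nothing contradictory and matches the intended argument.
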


\section{Main results}

The results of the present section are mainly criteria for the balanced projectivity of modules, but they are valid also if the condition of balanced projectivity is replaced by the one of complete decomposability. We provide the proofs for the former case only, but the proofs of the latter are established \emph {verbatim}.

\begin{theorem}
	\label {Lemma:3}
	The torsion-free module $A$ is balanced-projective if there exists a continuous, well-ordered, ascending chain \eqref {Eq:tau-Chain} of submodules of $A$, such that
	\begin{enumerate}
		\item[(a)] every $A _\alpha$ is a balanced submodule of $A _{\alpha + 1}$,
		\item[(b)] every factor module $A _{\alpha + 1} / A _\alpha$ is balanced-projective, and
		\item[(c)] $A = \bigcup _{\alpha < \tau} A _\alpha$.
	\end{enumerate}
\end{theorem}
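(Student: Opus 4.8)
The plan is to exploit the fact that hypotheses (a) and (b), taken together, are considerably stronger than they look: they force each link of \eqref{Eq:tau-Chain} to be a direct summand of the next, after which the whole chain telescopes into a single direct-sum decomposition of $A$. First I would record that, for every $\alpha<\tau$, the short exact sequence
\begin{equation*}
0\longrightarrow A_\alpha\longrightarrow A_{\alpha+1}\longrightarrow A_{\alpha+1}/A_\alpha\longrightarrow 0
\end{equation*}
is balanced-exact: its cokernel $A_{\alpha+1}/A_\alpha$ is torsion-free because $A_\alpha$ is balanced in $A_{\alpha+1}$ by (a) (balancedness of a submodule already includes torsion-freeness of the quotient), and $A_\alpha$ is balanced in $A_{\alpha+1}$, once more by (a). Since $A_{\alpha+1}/A_\alpha$ is balanced-projective by (b), applying its projective property to this very sequence and to the identity map of $A_{\alpha+1}/A_\alpha$ yields a section, so the sequence splits. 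Hence we may fix, for each $\alpha$, a submodule $C_{\alpha+1}$ of $A_{\alpha+1}$ with $A_{\alpha+1}=A_\alpha\oplus C_{\alpha+1}$ and $C_{\alpha+1}\cong A_{\alpha+1}/A_\alpha$; in particular each $C_{\alpha+1}$ is balanced-projective.

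Next I would prove, by transfinite induction on $\alpha\le\tau$, that $A_\alpha=\bigoplus_{\beta<\alpha}C_{\beta+1}$, an internal direct sum inside $A$ built from the complements just chosen. The base case $A_0=0$ is vacuous; the successor step is immediate from $A_{\alpha+1}=A_\alpha\oplus C_{\alpha+1}$ together with the inductive hypothesis for $A_\alpha$. For a limit ordinal $\lambda$ I would use continuity of the chain, $A_\lambda=\bigcup_{\alpha<\lambda}A_\alpha$: every element of $A_\lambda$ lies in some $A_\alpha$ with $\alpha<\lambda$, hence is a finite sum of elements of the $C_{\beta+1}$'s with $\beta<\lambda$, so $A_\lambda=\sum_{\beta<\lambda}C_{\beta+1}$; and any finite dependence relation among the $C_{\beta+1}$'s already lives inside some $A_\alpha$ with $\alpha<\lambda$ (taking $\alpha$ one more than the largest index occurring), where the sum is direct by the inductive hypothesis, so the relation is trivial. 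Evaluating at $\alpha=\tau$ if $\tau$ is a limit, or at the top link if $\tau$ is a successor, gives $A=\bigoplus_\beta C_{\beta+1}$, a direct sum of balanced-projective modules; since such direct sums are balanced-projective, $A$ is balanced-projective.

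I do not expect a genuine obstacle in this argument: the only delicate point is the bookkeeping in the limit step, namely checking that an increasing union of internal direct sums with mutually compatible summands is again their internal direct sum. What is worth emphasizing is that hypothesis (b), which postulates balanced projectivity of the consecutive factors $A_{\alpha+1}/A_\alpha$ rather than of the links $A_\alpha$ themselves, is exactly what makes the proof elementary — the substantive work of the paper is the construction, via the $G(\kappa)^\ast$-family machinery culminating in Theorem~\ref{Lemma:3-10}, of a chain whose consecutive factors do have this property. Finally, the completely decomposable version of the statement is obtained \emph{verbatim}, replacing "balanced-projective" by "completely decomposable" throughout and using that balanced-exact sequences ending in a completely decomposable module split (completely decomposable modules being balanced-projective) and that direct sums of completely decomposable modules are completely decomposable.
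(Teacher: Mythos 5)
Your proof is correct and follows essentially the same route as the paper's: the paper likewise observes that each sequence $0\rightarrow A_\alpha\rightarrow A_{\alpha+1}\rightarrow A_{\alpha+1}/A_\alpha\rightarrow 0$ is balanced-exact and splits by (b), writes $A_{\alpha+1}=A_\alpha\oplus B_\alpha$, and concludes that $A$ is the direct sum of the complements. You merely make explicit the transfinite induction and the use of continuity at limit ordinals, which the paper leaves implicit.
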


\begin{proof}
	The sequence $0 \rightarrow A _\alpha \rightarrow A _{\alpha + 1} \rightarrow A _{\alpha + 1} / A _\alpha \rightarrow 0$ is balanced-exact for every $\alpha < \kappa$, so that it splits. Then, there exists a balanced-projective submodule $B _\alpha$ of $A _{\alpha + 1}$, such that $A _{\alpha + 1} = A _\alpha \oplus B _\alpha$. The module $A$ is the direct sum of the modules $B _\alpha$, so balanced-projective.
\end{proof}

The following result is the version for balanced-projective modules, of the well-known Shelah-Eklof criterion of freeness from abelian group theory. For the definitions of the set-theoretic concepts and results employed in this theorem, we refer to \cite {Jech}.

\begin{theorem}
	Let $\kappa$ be an uncountable, regular ordinal number. A torsion-free module $M$ is balanced-projective if and only if there exists a $\kappa$-filtration \eqref {Eq:kappa-Chain} consisting of balanced-projective, balanced submodules of $M$, such that 
	\begin{equation*}
		E = \{ \alpha < \kappa : \exists \beta > \alpha \text{ such that } M _\beta / M _\alpha \text { is not balanced-projective} \}
	\end{equation*}
	is not stationary in $\kappa$.
\end{theorem}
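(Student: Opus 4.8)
The plan is to establish the two implications separately. The converse — that $M$ is balanced-projective as soon as it admits such a $\kappa$-filtration — is the substantive half, and it reduces quickly to Theorem~\ref{Lemma:3} once the non-stationarity of $E$ has been exploited; the direct implication is then routine.

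Assume we are given a $\kappa$-filtration \eqref{Eq:kappa-Chain} by balanced-projective, balanced submodules of $M$ with $E$ not stationary. Fix a club $S\subseteq\kappa$ disjoint from $E$ and let $\langle\alpha_\xi:\xi<\kappa\rangle$ be its increasing enumeration. Since $S$ is closed and \eqref{Eq:kappa-Chain} is continuous, the chain given by $N_\xi:=M_{\alpha_\xi}$ ($\xi<\kappa$) is again continuous and well-ordered, of length $\kappa$, with union $M$. Two elementary observations finish the argument. First, a balanced submodule $A$ of $M$ contained in a submodule $B$ of $M$ is balanced in $B$: the torsion-freeness of $B/A$ is clear, and given a homomorphism $\phi:J\to B/A$ from a torsion-free module $J$ of rank one, one lifts the composite $J\to B/A\hookrightarrow M/A$ along $M\to M/A$ to some $\psi:J\to M$ (possible as $A$ is balanced in $M$); then $\psi(j)+A=\phi(j)\in B/A$ for every $j\in J$, so $\psi(j)\in B+A=B$, and $\psi$ lifts $\phi$. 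Hence each $N_\xi$, being balanced in $M$, is balanced in $N_{\xi+1}$. Second, since $\alpha_\xi\notin E$, every quotient $M_\beta/M_{\alpha_\xi}$ with $\beta>\alpha_\xi$ is balanced-projective; in particular so is $N_{\xi+1}/N_\xi=M_{\alpha_{\xi+1}}/M_{\alpha_\xi}$. Thus $\{N_\xi\}_{\xi<\kappa}$ satisfies the hypotheses of Theorem~\ref{Lemma:3}, and $M$ is balanced-projective.

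For the direct implication, suppose $M$ is torsion-free and balanced-projective; we may assume $\mathrm{rk}(M)=\kappa$, since otherwise the required filtration is either trivial to write down or absent for cardinality reasons. By the structure theory recalled in Section~2, $M$ is a direct summand of a completely decomposable module $C=\bigoplus_{i\in I}C_i$ with each $C_i$ of rank one and $|I|=\kappa$; write $C=M\oplus K$. Enumerate $I$ by $\kappa$, let $\{C_\nu\}_{\nu<\kappa}$ be the associated chain of partial direct sums — each a direct summand of $C$ — and let $\pi:C\to M$ be the projection. A standard closure argument, which uses that $\kappa$ is regular, shows that $D:=\{\nu<\kappa:\pi(C_\nu)\subseteq C_\nu\}$ is closed and unbounded. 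For $\nu\in D$ one has $C_\nu=(C_\nu\cap M)\oplus(C_\nu\cap K)$, and a short computation inside $C=M\oplus K$ then shows that $M_\nu:=C_\nu\cap M$ is a direct summand of $M$ — in particular balanced in $M$ and balanced-projective — and that $M_\beta=M_\alpha\oplus(M_\beta\cap M'_\alpha)$ whenever $\alpha<\beta$ in $D$ and $M=M_\alpha\oplus M'_\alpha$; hence $M_\beta/M_\alpha$ is isomorphic to a direct summand of the balanced-projective module $M_\beta$, so is itself balanced-projective. Re-indexing $\{M_\nu:\nu\in D\}$ along the increasing enumeration of $D$, and discarding repeated terms, yields a $\kappa$-filtration of $M$ by balanced-projective, balanced submodules for which $E=\varnothing$, certainly not stationary.

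The converse implication is thus little more than Theorem~\ref{Lemma:3} together with the standard club combinatorics, and I expect the only point demanding genuine care to be the unboundedness of $D$ in the direct implication — the step where the regularity of $\kappa$, and tacitly the fact that $\kappa$ exceeds $|R|$ (so that submodules of rank less than $\kappa$ sit inside partial direct sums occurring in the filtration), are used. Finally, as announced, the completely decomposable analogue is obtained verbatim, with balanced-projectivity replaced by complete decomposability throughout; its direct implication is then immediate, since a completely decomposable module is already filtered by the partial direct sums of any of its decompositions into rank-one submodules.
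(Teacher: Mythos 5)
Your proof is correct, and the converse direction is essentially identical to the paper's: fix a club disjoint from $E$, relabel the filtration along its increasing enumeration, observe that each link (being balanced in $M$) is balanced in the next, and invoke Theorem~\ref{Lemma:3}. Your explicit lifting argument showing that a balanced submodule of $M$ is balanced in any intermediate submodule is a detail the paper silently assumes. In the forward direction you diverge mildly: the paper invokes the rank version of Kaplansky's theorem to write $M=\oplus_{\alpha<\kappa}H_\alpha$ with each $H_\alpha$ of countable rank, and then argues that the \emph{given} filtration agrees with the partial-sum filtration $\{B_\beta\}$ on a club, so that $E$ avoids that club; you instead re-derive the needed decomposition by hand, intersecting $M$ with the partial direct sums of an ambient completely decomposable module $C=M\oplus K$ and running a closure argument on the projection $\pi:C\to M$, which yields a filtration by direct summands with $E=\varnothing$. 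Both settle the existential statement as written; the paper's version implicitly proves the stronger ``for every $\kappa$-filtration'' form, while yours is more self-contained. One remark: your worry that the unboundedness of $D$ tacitly requires $\kappa>|R|$ is unnecessary --- the image of a rank-one submodule under a homomorphism into a direct sum of torsion-free modules over a domain has finite support (any two of its elements are dependent), so $\pi(C_\nu)$ is supported on at most $|\nu|\cdot\aleph_0<\kappa$ coordinates and regularity of $\kappa$ suffices.
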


\begin{proof}
	If $M$ is balanced-projective, then it is a direct summand of a completely decomposable module. By the rank version of Kaplansky's theorem \cite {Kaplansky}, $M$ is the direct sum of balanced-projective submodules of countable rank, say, $M = \oplus _{\alpha < \kappa} H _\alpha$. Let $B _\alpha = \oplus _{\beta < \alpha} H _\beta$, for every $\alpha < \kappa$. The set $C$ of indexes $\alpha < \kappa$ of those modules $M _\alpha$ which also appear in the filtration $\{ B _\beta \} _{\beta < \kappa}$ is a closed and unbounded set in $\kappa$. So, $\{ M _\alpha \} _{\alpha \in C}$ is a filtration of $M$ with the property that, for every $\alpha , \beta \in C$ with $\alpha < \beta$, $M _\beta / M _\alpha$ is balanced-projective. Therefore, $C$ does not intersect $E$ and, consequently, $E$ is not stationary in $\kappa$.
	
	Conversely, there is a closed and unbounded set $C$ in $\kappa$ which does not intersect $E$. Then $\{ M _\alpha \} _{\alpha \in C}$ is a filtration of balanced submodules of $M$, which can be rewritten as $\{ M _\alpha \} _{\alpha < \kappa}$ by relabeling. In this filtration, all the factor modules $M _{\alpha + 1} / M _\alpha$ are balanced-projective, and the conclusion follows from Theorem \ref {Lemma:3}.
\end{proof}

Finally, we give a generalization of Hill's criterion of freeness to balanced projectivity of modules over domains. Its reach clearly surpasses the criteria for complete decomposability proved by Rangaswamy in \cite {Rangas}.

\begin{theorem}
	\label{Thm:Main}
	A torsion-free module $M$ over a domain is balanced-projective if there exists a countable, ascending chain \eqref {Eq:CountChain} of submodules of $M$, such that
	\begin{enumerate}
		\item[(a)] every $M _n$ is balanced-projective,
		\item[(b)] every $M _n$ is pure in $M$, and
		\item[(c)] $M = \bigcup _{n < \omega} M _n$.
	\end{enumerate}
\end{theorem}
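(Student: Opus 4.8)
The plan is to recognise that, taking $\kappa=\aleph_0$, the module $M$ together with the given chain \eqref{Eq:CountChain} satisfies the Assumption of Section~3, with \eqref{Eq:CountChain} playing the role of \eqref{Eq:kappa-Chain}, and then to feed the chain produced by Theorem~\ref{Lemma:3-10} into Theorem~\ref{Lemma:3}. Any chain of length $\omega$ is automatically continuous and well-ordered, and properties $\mathbf{P_1}$ and $\mathbf{P_3}$ are exactly hypotheses~(b) and~(c), so everything reduces to verifying $\mathbf{P_2}$: that each $M_n$ admits a $G(\aleph_0)^\ast$-family of pure submodules. This is where hypothesis~(a) --- together with the reduction to complete decomposability --- is used. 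Since $M_n$ is torsion-free and balanced-projective, it is a direct summand of a completely decomposable module $C_n$ of the same rank, say $C_n=M_n\oplus L_n$. Fixing a decomposition of $C_n$ into rank-one submodules, I would take the partial direct sums $P$ that are \emph{compatible} with the splitting, in the sense that $P=(P\cap M_n)\oplus(P\cap L_n)$; a routine back-and-forth argument shows these form a $G(\aleph_0)^\ast$-family of $C_n$, and then $\mathcal{B}_n=\{\,P\cap M_n : P \text{ compatible}\,\}$ is a $G(\aleph_0)^\ast$-family of $M_n$. A short retraction argument shows each $P\cap M_n$ to be a direct summand of $M_n$, hence balanced-projective and pure in $M_n$, and so pure in $M$ by $\mathbf{P_1}$ and the transitivity of purity; moreover, whenever $B<A$ lie in $\mathcal{B}_n$, $B$ is a direct summand of $A$, so the quotient $A/B$ is again balanced-projective.

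Granting the Assumption, Theorem~\ref{Lemma:3-10} yields a continuous, well-ordered, ascending chain $0=N_0<N_1<\ldots<N_\rho<\ldots$ $(\rho<\sigma)$ with $M=\bigcup_{\rho<\sigma}N_\rho$ and with every factor $N_{\rho+1}/N_\rho$ isomorphic to a quotient $A/B$, where $B<A$ in some $\mathcal{B}_\nu$; by the last remark above, each such factor is balanced-projective, hence torsion-free. It then suffices to check that each $N_\rho$ is balanced in $N_{\rho+1}$, for then hypotheses~(a)--(c) of Theorem~\ref{Lemma:3} hold and that theorem gives at once that $M$ is balanced-projective.

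I expect this balancedness to be the principal obstacle, and the point where Lemma~\ref{Lemma:2} enters. Torsion-freeness of $N_{\rho+1}/N_\rho$ is immediate, but the projective property of rank-one modules relative to $0\to N_\rho\to N_{\rho+1}\to N_{\rho+1}/N_\rho\to0$ has to be drawn from the explicit description, recorded just before Theorem~\ref{Lemma:3-10}, of the $N_\rho$ as the modules $A_\alpha+(A_{\alpha+1}\cap M_\nu)$. The relevant inputs are parts~(a) and~(c) of Lemma~\ref{Lemma:3-9}, namely that $A_\alpha\cap M_\nu\in\mathcal{B}_\nu$ and $(A_\alpha\cap M_{\nu+1})+(A_{\alpha+1}\cap M_\nu)\in\mathcal{B}_{\nu+1}$, which exhibit the building blocks of consecutive terms of the chain as direct summands of the pertinent $M_\nu$; placing $N_\rho$ and $N_{\rho+1}$ as the outer terms of a commutative square whose inner terms are such summands, Lemma~\ref{Lemma:2} lifts a prescribed $\phi\colon J\to N_{\rho+1}/N_\rho$ to a map into $N_{\rho+1}$. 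This completes the proof. As noted at the start of the section, the argument runs \emph{verbatim} with ``completely decomposable'' in place of ``balanced-projective'' --- there $\mathcal{B}_n$ may simply be the family of partial direct sums of $M_n$ itself, and the factors $A/B$ are completely decomposable --- and Lemma~\ref{Lemma:6} records, for chains whose links have countable rank, the precise sense in which the balanced-projective case reduces to this completely decomposable one; specialising the completely decomposable form to valuation domains recovers Rangaswamy's theorem.
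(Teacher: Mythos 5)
Your proposal is correct, and its core --- verifying the Assumption of Section~3 with $\kappa=\aleph_0$, invoking Lemma~\ref{Lemma:3-9} and Theorem~\ref{Lemma:3-10}, establishing balancedness of $N_\rho$ in $N_{\rho+1}$ via the explicit description $N_\rho=A_\alpha+(A_{\alpha+1}\cap M_\nu)$ together with Lemma~\ref{Lemma:2}, and finishing with Theorem~\ref{Lemma:3} --- is exactly the paper's argument. The one genuine divergence is in how property $\mathbf{P_2}$ is secured. The paper first applies Lemma~\ref{Lemma:6} to pass to an enveloping chain $\{F_n\}$ of completely decomposable modules containing $M$ as a summand of its union, and then takes $\mathcal{B}_n$ to be the family of partial direct sums of $F_n$, for which all the required properties are immediate. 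You instead build the families directly on the balanced-projective links: writing $C_n=M_n\oplus L_n$ with $C_n$ completely decomposable, you take the partial direct sums $P$ compatible with the splitting and set $\mathcal{B}_n=\{P\cap M_n\}$. This buys you a cleaner logical path (in particular, it sidesteps the fact that Lemma~\ref{Lemma:6} as stated carries a countable-rank hypothesis not present in the theorem), at the cost of having to verify the $G(\aleph_0)^\ast$-axioms for a family that is only the \emph{image} of a nice family under $P\mapsto P\cap M_n$. The extension property and the back-and-forth closure you describe are indeed routine (finiteness of the supports $\mathrm{supp}\,\pi(x)$, $x\in J_i$, follows from rank one and torsion-freeness), but closure of $\mathcal{B}_n$ under unions of ascending chains deserves an explicit word, since a chain $P_\gamma\cap M_n$ need not arise from a chain of $P_\gamma$'s: it holds because a sum of compatible partial direct sums is again a compatible partial direct sum $Q$ with $Q\cap M_n=\pi(Q)=\sum_\gamma(P_\gamma\cap M_n)$, so the union of the chain is again a member. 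With that point filled in, your argument is complete; the remaining steps (members of $\mathcal{B}_n$ are direct summands of $M_n$ by the retraction argument, hence pure in $M$ by transitivity, and $B$ is a summand of $A$ whenever $B<A$ in $\mathcal{B}_n$, so the factors $N_{\rho+1}/N_\rho\cong A/B$ are balanced-projective) are all sound.
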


\begin{proof}
In view of Lemma \ref {Lemma:6}, we reduce the proof to the case when every $M _n$ is a completely decomposable submodule of $M$. So, for every $n < \omega$, fix a decomposition of $M _n$ into rank-one, torsion-free submodules, and let $\mathcal {B} _n$ be the $G (\aleph _0) ^\ast$-family of all partial direct sums in such decomposition. By Lemma \ref {Lemma:3-9} and Theorem \ref {Lemma:3-10}, there exists a continuous, well-ordered, ascending chain \eqref {Eq:tau-Chain} of submodules of $M$ whose union is $M$ itself, such that for every $\alpha < \tau$ and every $n < \omega$,
\begin{enumerate}
	\item[(i)] $A _\alpha \cap M _n \in \mathcal {B} _n$,
	\item[(ii)] $(A _\alpha \cap M _{n + 1}) + (A _{\alpha + 1} \cap M _n) \in \mathcal {B} _{n + 1}$, and
	\item[(iii)] the factor module
		\begin{equation*}
			\frac {A _\alpha + (A _{\alpha + 1} \cap M _{n + 1})} {A _\alpha + (A _{\alpha + 1} \cap M _n)} \cong \frac {A _{\alpha + 1} \cap M _{n + 1}} {(A _\alpha \cap M _{n + 1}) + (A _{\alpha + 1} \cap M _n)}
		\end{equation*}
		is completely decomposable.
\end{enumerate}
Since $(A _\alpha \cap M _{n + 1}) + (A _{\alpha + 1} \cap M _n)$ is a balanced submodule of $A _{\alpha + 1} \cap M _{n + 1}$, we have the following commutative diagram with balanced-exact first column and exact second column:
\begin{equation*}
	\xymatrix{
	 & 0 \ar[d] & 0 \ar[d] \\
	0 \ar[r] & (A _\alpha \cap M _{n + 1}) + (A _{\alpha + 1} \cap M _n) \ar[d]\ar[r]^\iota & A _\alpha + (A _{\alpha + 1} \cap M _n) \ar[d] \\
	0 \ar[r] & A _{\alpha + 1} \cap M _{n + 1} \ar[d]\ar[r]^{\iota ^\prime} & A _\alpha + (A _{\alpha + 1} \cap M _{n + 1}) \ar[d] \\
	 & \displaystyle {\frac {A _{\alpha + 1} \cap M _{n + 1}} {(A _\alpha \cap M _{n + 1}) + (A _{\alpha + 1} \cap M _n)}} \ar[d]\ar[r]^\cong & \displaystyle {\frac {A _\alpha + (A _{\alpha + 1} \cap M _{n + 1})} {A _\alpha + (A _{\alpha + 1} \cap M _n)}} \ar[d] \\
	 & 0 & 0}
\end{equation*}
Here, $\iota$ and $\iota ^\prime$ represent the inclusion maps. Lemma \ref {Lemma:2} implies that the second column is balanced-exact. In this way, the chain $\{ N _\rho \} _{\rho < \sigma}$ defined in the paragraph preceding Theorem \ref {Lemma:3-10}, is actually a continuous, well-ordered, ascending chain of submodules of $M$ satisfying the hypotheses of Theorem \ref {Lemma:3} for the module $M$. We conclude that $M$ is completely decomposable.
\end{proof}

The \emph {balanced-projective dimension} of a module $M$ is defined in similar way as the projective dimension, but taking balanced-projective resolutions instead of projective ones, and it is usually denoted by $\bpd M$ (refer to \cite {Fuchs-Salce2}). The following result ---which is a generalization of Auslander's lemma \cite {Auslander}--- is a direct consequence of Theorem \ref {Thm:Main} and induction.

\begin{corollary}
	A torsion-free module $M$ satisfies $\bpd M \leq k$ if there exists a countable, ascending chain \eqref {Eq:CountChain} of submodules of $M$, such that
\begin{enumerate}
	\item[(a)] every $M _n$ is balanced-projective,
	\item[(b)] every $M _n$ satisfies $\bpd M _n \leq k$, and
	\item[(c)] $M = \bigcup _{n < \omega} M _n$. \qed
\end{enumerate}
\end{corollary}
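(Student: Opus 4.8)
The plan is to argue by induction on $k$. For $k = 0$ the assertion is that $M$ itself is balanced-projective, which is exactly Theorem~\ref{Thm:Main} (for $k = 0$, hypothesis (b) amounts to balanced projectivity of the links, and the links are pure in $M$). So assume $k \geq 1$ and that the corollary is already available with $k$ replaced by $k - 1$.

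The first step is to replace the chain $\{ M_n \}$ by a compatible chain of balanced-exact presentations. By a secondary induction on $n$ I would build completely decomposable modules $F_0 \leq F_1 \leq \ldots$ and epimorphisms $\pi_n \colon F_n \to M_n$ that are compatible with the chain --- in the sense that $\pi_{n+1}$ restricts to $\pi_n$ along the inclusions $F_n \hookrightarrow F_{n+1}$ and $M_n \hookrightarrow M_{n+1}$ --- such that each sequence $0 \to K_n \to F_n \to M_n \to 0$ with $K_n = \ker \pi_n$ is balanced-exact, $\bpd K_n \leq k - 1$, and $F_n$ is a direct summand of $F_{n+1}$. For the inductive step, use $\bpd M_{n+1} \leq k$ to choose a balanced-exact sequence $0 \to L \to G \to M_{n+1} \to 0$ with $G$ completely decomposable and $\bpd L \leq k - 1$; since $F_n$ is balanced-projective, lift the composite $F_n \to M_n \hookrightarrow M_{n+1}$ through $G \to M_{n+1}$ to a homomorphism $g \colon F_n \to G$; then put $F_{n+1} = F_n \oplus G$, embed $F_n$ by $x \mapsto (x, g(x))$ (so that $F_n$ is a direct summand of $F_{n+1}$, with complement a copy of $G$), and let $\pi_{n+1}(x, y)$ be the image of $y$ in $M_{n+1}$. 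A routine check gives that $\pi_{n+1}$ restricts to $\pi_n$, that $K_{n+1} = F_n \oplus L$ so that $\bpd K_{n+1} \leq k - 1$, and that the new row --- being the direct sum of the identity sequence on $F_n$ with the chosen row for $M_{n+1}$ --- is balanced-exact.

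Next, pass to unions. Then $F = \bigcup_{n < \omega} F_n$ is a direct sum of the successive complements, hence completely decomposable (in particular balanced-projective), each $F_n$ is a direct summand of $F$, the maps $\pi_n$ glue to an epimorphism $\pi \colon F \to M$, and $K := \ker \pi$ satisfies $K \cap F_n = K_n$ and $K = \bigcup_{n < \omega} K_n$. The decisive point is that $0 \to K \to F \to M \to 0$ is balanced-exact. As $F / K \cong M$ is torsion-free, it remains to lift an arbitrary homomorphism $\phi$ from a rank-one, torsion-free module $J$ into $M$ along $\pi$; here one uses the purity of the links. Since $\phi(J)$ has rank at most one and $M_n$ is pure, hence relatively divisible, in $M$, the submodule $\phi(J) \cap M_n$ is relatively divisible in $\phi(J)$; being then either $0$ or of full rank, it equals $0$ or $\phi(J)$, and as these submodules increase with $n$ and union to $\phi(J)$ we conclude $\phi(J) \subseteq M_n$ for some $n$. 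Thus $\phi$ factors through $M_n$, lifts through $\pi_n$ by balanced-exactness of the $n$-th row, and composing with $F_n \hookrightarrow F$ yields the required lift of $\phi$. Since also $F_n$ is a direct summand of $F$ and $K_n$ is balanced in $F_n$, it follows that $K_n$ is pure in $F$, hence pure in $K$. Therefore $K = \bigcup_{n < \omega} K_n$ is the union of a countable, ascending chain of pure submodules, each of balanced-projective dimension at most $k - 1$, so the inductive hypothesis gives $\bpd K \leq k - 1$; and from the balanced-exact sequence $0 \to K \to F \to M \to 0$ with $F$ balanced-projective we obtain $\bpd M \leq \bpd K + 1 \leq k$.

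I expect the main obstacle to be the balanced-exactness of the union sequence $0 \to K \to F \to M \to 0$: the entire argument rests on being able to push a homomorphism out of a rank-one, torsion-free module into a single link $M_n$, and this is precisely the point at which the purity of the links in $M$ is used --- the same feature that makes Theorem~\ref{Thm:Main} nontrivial. The subsidiary task of arranging the presentations $0 \to K_n \to F_n \to M_n \to 0$ compatibly along the chain, so that their first syzygies $K_n$ in turn form an ascending chain of pure submodules to which the inductive hypothesis applies, is routine given the direct-sum construction sketched above.
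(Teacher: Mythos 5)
The paper offers no argument for this corollary beyond ``Theorem~\ref{Thm:Main} and induction,'' and your architecture --- induction on $k$, compatible balanced-exact presentations $0 \to K_n \to F_n \to M_n \to 0$ with $F_n$ a summand of $F_{n+1}$, balanced-exactness of the union sequence via the observation that a rank-one image $\phi(J)$ must land in a single link $M_n$ (you correctly read hypothesis (a), which as printed is redundant, as ``every $M_n$ is pure in $M$''), and the dimension shift $\bpd M \le \bpd K + 1$ --- is surely the intended one, and those steps are sound.

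There is, however, one genuine gap: the sentence ``$K_n$ is balanced in $F_n$ \ldots\ it follows that $K_n$ is pure in $F$, hence pure in $K$.'' Balancedness does \emph{not} imply purity over an arbitrary integral domain; the paper itself asserts only that balanced submodules are relatively divisible, and notes that purity and relative divisibility coincide precisely over Pr\"ufer domains. The two conditions are lifting properties against different test classes (rank-one torsion-free modules versus finite systems of equations, i.e.\ finitely presented modules), and neither class contains the other. Concretely, your $K_{n+1} \cong F_n \oplus L$ is pure in $F_{n+1} = F_n \oplus G$ only if $L$ is pure in $G$, and a balanced-exact presentation $0 \to L \to G \to M_{n+1} \to 0$ need not be pure-exact (the standard balanced presentation $\oplus_c R_c \to C$ is not). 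So what your construction actually delivers is a chain of \emph{relatively divisible} submodules $K_n$ of $K$ with $\bpd K_n \le k-1$, whereas the inductive hypothesis --- and, in the base case, Theorem~\ref{Thm:Main}, whose proof rests on the Assumption of Section~3 requiring pure links and pure $G(\kappa)^\ast$-families --- demands purity. To close the gap you must either establish the relatively-divisible version of Theorem~\ref{Thm:Main} (which Lemma~\ref{Lemma:6} gestures at but the paper does not carry through Section~3), or arrange the presentations so that the syzygies are genuinely pure, which is not automatic.
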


\end{document}